\title{A Brualdi–Hoffman–Tur\'{a}n problem for friendship graph}
\date{}
\author{Fan Chen$^{1,\, 2}$, Xiying Yuan$^{1,\, 2}$ \thanks{Corresponding author.\\ \indent This work is supported by the National Natural Science Foundation of China (Nos. 11871040,
12271337, 12371347) \\ \indent Email address: xiyingyuan@shu.edu.cn (Xiying Yuan) chenfan@shu.edu.cn (Fan Chen)}}
\affil{{\footnotesize\emph{1. Department of Mathematics, Shanghai University, Shanghai 200444, P.R. China}}\\
{\footnotesize\emph{2. Newtouch Center for Mathematics of Shanghai University, Shanghai 200444, P.R. China}}}
\begin{document}
\newtheorem{theorem}{Theorem}[section]
\newtheorem{assumption}[theorem]{Assumptio}
\newtheorem{corollary}[theorem]{Corollary}
\newtheorem{proposition}[theorem]{Proposition}
\newtheorem{lemma}[theorem]{Lemma}
\newtheorem{definition}[theorem]{Definition}
\newtheorem{remark}[theorem]{Remark}
\newtheorem{problem}[theorem]{Problem}
\newtheorem{claim}{Claim}
\newtheorem{conjecture}[theorem]{Conjecture}
\newtheorem{fact}{Fact}

\maketitle
\noindent\rule[0pt]{17.5cm}{0.09em}

\noindent{\bf Abstract}

A graph is said to be $H$-free if it does not contain $H$ as a subgraph. Brualdi–Hoffman–Tur\'{a}n type problem is to determine the maximum spectral radius of an $H$-free graph $G$ with give size $m$. The $F_k$ is the graph consisting of $k$ triangles that intersect in exactly one common vertex, which is known as the friendship graph. In this paper, we resolve a conjecture (the Brualdi–Hoffman–Tur\'{a}n-type problem for $F_k$) of Li, Lu and Peng [Discrete Math. 346 (2023) 113680] by using  the $k$-core technique presented in Li, Zhai and Shu [European J. Combin, 120 (2024) 103966].

\noindent

\noindent{\bf Keywords:} Friendship graph; Spectral radius; Brualdi–Hoffman–Tur\'{a}n type problem.

\noindent{\bf AMS subject classifications:} 05C50; 05C35

\noindent\rule[0pt]{17.5cm}{0.05em}

\section{Introduction}
Let $G$ be a simple graph with vertex set $V(G)$ and edge set $E(G)$. We usually write $n$ and $m$ for the number of vertices and edges respectively. For a vertex $u\in V(G)$, let $N_G(u)$ be the neighborhood of $u$ and $N_G[u] = N_G(u)\cup\{u\}$. For simplicity, we use $N_S(u)$ to denote $N_G(u)\cap S$. For a subset $A\subseteq V(G)$, we write $e(A)$ for the number of edges with two end vertices in $A$. For two disjoint sets $A$, $B$, we write $e(A, B)$ for the number of edges between $A$ and $B$. Let $P_n$, $C_n$, and $K_n$ be the path, the cycle and the complete graph on $n$ vertices, respectively.

For two disjoint graphs $G$ and $H$, let $G\cup H$ be the graph with vertex set $V(G)\cup V(H)$ and edge set $E(G)\cup E(H)$. The join of two disjoint graphs $G$ and $H$, denoted by $G\vee H$, is obtained from $G\cup H$ by adding all possible edges between $G$ and $H$.

A graph $G$ is called $H$-free if it does not contain a copy of $H$ as a subgraph. The classical Tur\'{a}n problem is to determine the maximum number of edges, $ex(n, H)$, of an $n$ vertex $H$-free
graph, where the value $ex(n, H)$ is called the Tur\'{a}n number. Denote $Ex(n, H)$ for
the set of all $n$ vertex $H$-free graphs with maximum number of edges. Let $F_k$ denote the graph consisting of $k$ triangles that intersect in exactly one common vertex and this graph is known as the friendship graph. In 1995, Erd\H{o}s, F\"{u}redi, Gould and Gunderson proved the following result.

\begin{theorem}\cite{EFGG}
For every $k \geqslant 1$ and $n \geqslant 50k^2$,
$$ex(n,F_k)=\bigg\lfloor\frac{n^2}{4}\bigg\rfloor+\left\{
     \begin{array}{ll}
       k^2-k, & \hbox{if $k$ is odd;} \\
       k^2-\frac{3}{2}k, & \hbox{if $k$ is even.}
     \end{array}
 \right.$$
\end{theorem}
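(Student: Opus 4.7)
The plan is to establish the lower and upper bounds separately, leveraging the structural observation that a graph $G$ is $F_k$-free if and only if at every vertex $v$ the \emph{link graph} $G[N_G(v)]$ has matching number at most $k-1$, since a vertex $v$ lies at the centre of an $F_k$ exactly when its neighbourhood contains $k$ pairwise vertex-disjoint edges.

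For the lower bound I would start from the balanced complete bipartite graph $K_{\lceil n/2\rceil,\lfloor n/2\rfloor}$, contributing $\lfloor n^2/4\rfloor$ edges, and add into one part an auxiliary graph $H$ on a constant number of vertices. Testing the $F_k$-free property on the resulting graph reduces to two conditions on $H$: the maximum degree $\Delta(H)\leq k-1$, forced by the links of vertices in $V(H)$ (each such link contains a complete bipartite graph between $N_H(u)$ and the opposite part), and the matching number $\nu(H)\leq k-1$, forced by the links of vertices in the opposite part (each such link contains $H$ itself). Maximising $|E(H)|$ subject to these two constraints gives $k^2-k$ when $k$ is odd, achieved by two vertex-disjoint copies of $K_k$, and $k^2-\tfrac{3}{2}k$ when $k$ is even, achieved by a slightly more delicate construction; optimality in both cases follows by combining the Erd\H{o}s--Gallai bound on matching number with a short degree-counting argument.

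For the upper bound I would proceed by induction on $n\geq 50k^2$. If the extremal $F_k$-free graph $G$ has a vertex $v$ with $d_G(v)\leq n/2$, then $G-v$ is still $F_k$-free and the inductive hypothesis immediately yields the desired bound, so the only serious case is when the minimum degree of $G$ exceeds $n/2$. In this high minimum-degree regime I would take a max-cut bipartition $V=A\sqcup B$ and show that nearly every vertex is joined to nearly all of the opposite part; together with $F_k$-freeness this forces both $\Delta(G[A]),\Delta(G[B])\leq k-1$ and $\nu(G[A]),\nu(G[B])\leq k-1$, so that the $(\Delta,\nu)$-constrained extremal problem used in the lower bound caps $e(G[A])+e(G[B])$ by $c_k$. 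Combined with $e(A,B)\leq|A||B|\leq\lfloor n^2/4\rfloor$, this gives the claimed inequality.

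The chief obstacle is this high minimum-degree step, in which one must promote ``almost every vertex'' to ``every vertex'' in the conclusions above. The standard remedy is a local edge-swap argument: delete a ``bad'' edge inside $A$ and insert a missing cross edge to $B$, while verifying that no new $F_k$ arises and that the edge count does not decrease. The hypothesis $n\geq 50k^2$ is precisely what provides the slack needed to iterate such swaps. A secondary but non-trivial point is fixing the parity dependence of $c_k$ in the auxiliary $(\Delta,\nu)$-extremal problem: the bound $k^2-\tfrac{3}{2}k$ for even $k$ strictly exceeds the naive $(k-1)^2$ one would obtain from $K_k\cup K_{k-1}$, so identifying and certifying the correct extremal $H$ requires a careful case distinction rather than a one-line computation.
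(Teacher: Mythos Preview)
The paper does not prove this statement at all: it is quoted verbatim from Erd\H{o}s, F\"{u}redi, Gould and Gunderson as background (Theorem~1.1, with citation \cite{EFGG}) and no argument is given. There is therefore no ``paper's own proof'' to compare your proposal against.

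For what it is worth, the strategy you outline---lower bound via $K_{\lceil n/2\rceil,\lfloor n/2\rfloor}$ augmented by an auxiliary graph $H$ subject to $\Delta(H)\le k-1$ and $\nu(H)\le k-1$, upper bound by minimum-degree induction reducing to a max-cut stability analysis in the dense case---is precisely the approach of the original Erd\H{o}s--F\"{u}redi--Gould--Gunderson paper, and the two obstacles you single out (upgrading ``almost all'' to ``all'' via edge swaps, and determining the even-$k$ constant through the $(\Delta,\nu)$-constrained extremal problem) are exactly the places where that paper does the real work.
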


Let $A(G)$ be the adjacency matrix of graph $G$. Since $A(G)$ is real symmetric, its eigenvalues are real and hence can be ordered as $\lambda_1(G)\geqslant\cdots\geqslant\lambda_n(G)$, where $\lambda_1(G)$ is called the spectral radius of G and also denoted by $\rho(G)$. In \cite{N1}, Nikiforov  proposed a spectral Tur\'{a}n  type problem is to determine the maximum spectral radius of an $H$-free graph of order $n$, which is also called the Brualdi-Solheid-Tur\'{a}n type problem. The study of the Brualdi-Solheid-Tur\'{a}n type problem has a long history, such as $K_{r+1}$ \cite{N2, W}, $K_{s,t}$ \cite{BG, N2, N3}, $P_k$ \cite{N4}, $C_{2k+1}$ \cite{N5}, $C_{2k+2}$ \cite{CDT}. For more details, we refer to read a survey \cite{LN}. The Brualdi-Solheid-Tur\'{a}n type problem for $F_k$ was characterized by Cioab\v{a}, Feng, Tait and Zhang in the following.

\begin{theorem}\cite{CFTZ}
Let $k\geqslant 2$ and G be an $F_k$-free graph on $n$ vertices. For sufficiently large $n$, if $G$ has
the maximal spectral radius, then $G\in Ex(n, F_k)$.
\end{theorem}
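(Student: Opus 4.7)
The plan is to sandwich the spectral extremal graph $G^*$ between an edge-extremal graph and a near-bipartite target. Let $G_0 \in Ex(n,F_k)$. Since $G_0$ is $F_k$-free, by the spectral maximality of $G^*$ we have $\rho(G^*) \geq \rho(G_0)$; and since the Erd\H{o}s--F\"{u}redi--Gould--Gunderson description of $Ex(n,F_k)$ shows that $G_0 \supseteq K_{\lceil n/2\rceil, \lfloor n/2\rfloor}$, we obtain $\rho(G^*) \geq \sqrt{\lceil n/2\rceil \lfloor n/2\rfloor}$. Combining this with $\rho(G^*) \leq \sqrt{2m - n + 1}$ (or with Hong-type bounds) and with the Tur\'{a}n bound $m \leq ex(n,F_k) = \lfloor n^2/4\rfloor + O(k^2)$ from Theorem 1.1, one confines $\rho(G^*)$ to a narrow band around $n/2$ and shows that $G^*$ has $\lfloor n^2/4 \rfloor + \Theta(1)$ edges.

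Next, I would extract structural information from the Perron eigenvector $x$, normalized by $x_u = \max_v x_v = 1$ at some vertex $u$. The eigenequation $\rho^2 x_u = \sum_{w} |N(u) \cap N(w)| x_w$ together with the lower bound on $\rho$ forces $d(u) = (1+o(1))n/2$, and it forces the codegree $|N(u)\cap N(w)|$ to be close to $n/4$ for almost every $w \notin N[u]$. Crucially, $F_k$-freeness caps every codegree by $|N(u) \cap N(w)| \leq 2(k-1)$ whenever $uw \in E(G^*)$, since otherwise $k$ triangles through $u$ emerge; this pins down $e(N(u))$ as $O(k^2)$ and similarly controls the edges inside $V(G^*)\setminus N[u]$. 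Via a stability step (e.g., the Erd\H{o}s--Simonovits stability argument applied to the $F_k$-free graph $G^*$), one concludes that $G^*$ differs from $K_{\lceil n/2\rceil, \lfloor n/2\rfloor}$ by only $O_k(1)$ edge modifications.

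Finally, I would perform a local swap/shifting analysis to identify $G^*$ exactly. For any vertex $v$ whose eigenvector entry $x_v$ is close to $1$, I would argue, using the sharp inequality $\rho(G^* - vw_1 + vw_2) > \rho(G^*)$ whenever $x_{w_2} > x_{w_1}$ and the move preserves $F_k$-freeness, that $v$ has no missing edge to the other side and has exactly the permissible structure on its own side. By the classification in Theorem 1.1 this means $G^* = K_{\lceil n/2\rceil, \lfloor n/2\rfloor}$ plus a matching of size $k-1$ (for odd $k$) or plus the specific near-matching structure with one extra edge (for even $k$), whence $G^* \in Ex(n,F_k)$. The \emph{main obstacle} is the last step: stability alone only gives $o(n^2)$ error, so one must design the swap argument sharply enough to distinguish between $K_{\lceil n/2\rceil, \lfloor n/2\rfloor}$ augmented by, say, $k-2$ versus $k-1$ edges on one side, and to separate the odd/even cases of Theorem 1.1. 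This requires a careful comparison of $\rho(K_{\lceil n/2\rceil, \lfloor n/2\rfloor} \vee H)$ over all admissible small graphs $H$ with $|V(H)| \leq 2k$, which I expect to drive the quantitative part of the proof.
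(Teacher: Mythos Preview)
The paper does not contain a proof of this statement: Theorem~1.2 is quoted from \cite{CFTZ} purely as background, and the present paper's own work concerns the Brualdi--Hoffman--Tur\'an problem (Theorem~\ref{th1}) rather than the Brualdi--Solheid--Tur\'an problem. So there is no ``paper's own proof'' to compare against; any assessment must be on the internal merits of your sketch.

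On those merits, the broad architecture (lower bound from an extremal graph, eigenvector/stability analysis, then local edge-switching) is the standard route and is indeed how \cite{CFTZ} proceeds. However, one step is wrong as stated. You claim that $F_k$-freeness forces $|N(u)\cap N(w)|\leqslant 2(k-1)$ whenever $uw\in E(G^*)$, ``since otherwise $k$ triangles through $u$ emerge.'' This is false: the triangles $uwz$ for $z\in N(u)\cap N(w)$ all share the \emph{edge} $uw$, not merely the vertex $u$, so they form a book $B_r$, not a friendship graph. Concretely, $B_r=K_2\vee rK_1$ is $F_2$-free for every $r$, yet its two spine vertices are adjacent with $r$ common neighbours. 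The correct structural constraint is that $G[N(u)]$ is $kK_2$-free (a $k$-matching in $N(u)$ yields an $F_k$ at $u$), which via Lemma~\ref{l1} gives $e(N(u))\leqslant (k-1)d(u)$, i.e.\ an $O(kn)$ bound, not the $O(k^2)$ you assert. The $O(k^2)$ bound only emerges \emph{after} one has already established the near-bipartite structure, so it cannot be used as an input to the stability step. You will need to reorganise the argument so that the $kK_2$-free condition on neighbourhoods, together with the spectral lower bound, drives the stability, and only then refine the count of edges inside each part.
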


Another interesting spectral Tur\'{a}n type problem asks what is the maximum spectral radius of an $H$-free graph with given size $m$, which is also known as Brualdi-Hoffman-Tur\'{a}n type problem (see \cite{BH}). In \cite{N6}, Nosal showed that for any triangle-free graph $G$ with $m$ edges, we have $\rho(G)
\leqslant \sqrt{m}$. Subsequently, Nikiforov in \cite{N7, N8} extended Nosal's result to $K_{r+1}$.
\begin{theorem} \cite{N7, N8}
If $G$ is $K_{r+1}$-free with $m$ edges, then
$$\rho(G)\leqslant \sqrt{2m(1-\frac{1}{r})}.$$
with equality if and only if $G$ is a complete bipartite graph for $r=2$, and $G$ is a complete regular $r$-partite graph for $r \geqslant 3$.
\end{theorem}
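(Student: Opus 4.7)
My approach combines the Rayleigh-quotient form of the spectral radius with the Cauchy--Schwarz inequality and the classical Motzkin--Straus theorem. Let $\mathbf{x}$ be the nonnegative Perron eigenvector of $A(G)$ normalized by $\|\mathbf{x}\|_{2}=1$. Then
\begin{equation*}
\rho(G)=\mathbf{x}^{\mathrm{T}}A(G)\mathbf{x}=2\sum_{uv\in E(G)}x_{u}x_{v},
\end{equation*}
so the task reduces to bounding the edge-indexed bilinear form on the right.

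First I would apply the Cauchy--Schwarz inequality to the $m$ edge-terms:
\begin{equation*}
\Bigl(\sum_{uv\in E(G)}x_{u}x_{v}\Bigr)^{\!2}\leqslant m\sum_{uv\in E(G)}x_{u}^{2}x_{v}^{2},
\end{equation*}
whence $\rho(G)^{2}\leqslant 4m\sum_{uv\in E(G)}x_{u}^{2}x_{v}^{2}$. Setting $y_{v}:=x_{v}^{2}$ produces a nonnegative vector with $\sum_{v}y_{v}=1$, so the right-hand side equals $4m\sum_{uv\in E(G)}y_{u}y_{v}$. The Motzkin--Straus theorem asserts that
\begin{equation*}
\max_{\mathbf{y}\geqslant\mathbf{0},\,\sum_{v}y_{v}=1}\sum_{uv\in E(G)}y_{u}y_{v}=\tfrac{1}{2}\bigl(1-1/\omega(G)\bigr),
\end{equation*}
where $\omega(G)$ is the clique number. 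Since $G$ is $K_{r+1}$-free, $\omega(G)\leqslant r$, and combining the two bounds yields the desired inequality $\rho(G)^{2}\leqslant 2m(1-1/r)$.

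For the equality characterization I would track when both inequalities are simultaneously tight. Cauchy--Schwarz equality demands that the product $x_{u}x_{v}$ be the same for every edge, and Motzkin--Straus equality demands that the class sums $\sum_{v\in V_{i}}y_{v}$ over an appropriate vertex partition $V_{1},\dots,V_{\omega}$ all equal $1/\omega(G)$. Restricting attention to the component of $G$ carrying the spectral radius (where $\mathbf{x}>\mathbf{0}$), the Cauchy--Schwarz condition forces $V(G)$ to split into classes on which $\mathbf{x}$ is constant, with edges only between distinct classes whose $x$-values multiply to the common constant; together with $\omega(G)\leqslant r$ and the Motzkin--Straus condition, this forces $G$ to be complete $r$-partite. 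For $r=2$ the single constraint $\alpha_{1}\alpha_{2}=c$ leaves the part-sizes free, producing an arbitrary $K_{s,t}$; for $r\geqslant 3$, the pairwise conditions $\alpha_{i}\alpha_{j}=c$ for all $i\neq j$ force $\alpha_{1}=\cdots=\alpha_{r}$, after which the Perron eigenvalue equation $\rho(G)\alpha=(n-n_{i})\alpha$ forces all parts to have equal size.

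The main obstacle will be the equality analysis. Motzkin--Straus maximizers are famously non-unique on graphs that are not cliques, so one must carefully characterize the full-support optimizers and then merge this with Cauchy--Schwarz tightness to extract complete $r$-partiteness, together with the balanced-parts condition when $r\geqslant 3$.
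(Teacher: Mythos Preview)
The paper does not supply its own proof of this statement; Theorem~1.3 is quoted as a background result from Nikiforov~\cite{N7,N8} without argument, so there is no in-paper proof to compare against.

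That said, your proposal is essentially Nikiforov's original argument: Cauchy--Schwarz on the edge terms followed by the Motzkin--Straus bound on the quadratic form $\sum_{uv\in E}y_uy_v$ over the simplex. The inequality portion is correct as written. Your equality discussion is on the right track but, as you yourself flag, is the delicate part: one needs the full-support characterization of Motzkin--Straus optimizers (the support of an optimizer induces a complete multipartite graph with equal class weights) together with the Perron vector's strict positivity on the spectral component to force complete $r$-partiteness, after which the eigenvalue equation pins down regularity for $r\geqslant 3$. With those details filled in, the argument goes through.
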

Furthermore, Bollob\'{a}s and Nikiforov in \cite{BN} conjectured that if $G$ is a $K_{r+1}$-free graph of order at least $r+1$ with $m$ edges, then
$$\lambda_1^2(G)+\lambda_2^2(G)\leqslant 2m(1-\frac{1}{r}).$$
Ando and Lin proved the conjecture holds for $r$-partite graphs in \cite{AL}. The case $r=2$ was confirmed by Lin, Ning and Wu in \cite{LNW}. Recently, Zhang in \cite{Z} proved the conjecture holds for regular graphs.

A series of problems mentioned above have stimulated a great deal of interest in the study of Brualdi-Hoffman-Tur\'{a}n type problem, such as $C_4$ \cite{N9}, $K_{2,r+1}$ \cite{ZLS}, $C_5$ or $C_6$ \cite{MLH, SLW}, $C_7$ or $C_7^+$ \cite{LLL}, $C_5^+$ or $C_6^+$ \cite{SLW, FY, LW}, $C_{2k+1}^+$ or $C_{2k+2}^+$ \cite{LZS} $K_1\vee P_4$ \cite{ZW, YLP}, $K_1\vee P_6$ \cite{ZW2}, where $C_k^+$ is a graph on $k$ vertices obtained from $C_k$ by adding a chord between two vertices with distance two. In \cite{LZS}, Li, Zhai and Shu used a crucial "$k$-core" technique which was defined in \cite{PSW}.

Recall that $F_k$ is the friendship graph. Recently, in \cite{LLP}, Li, Lu and Peng proposed a conjecture about $F_k$, and they proved the case $k=2$.

\begin{conjecture}\cite{LLP}
Let $k\geqslant 2$ and $m$ large enough. If $G$ is $F_k$-free with given size $m$, then
$$\rho(G)\leqslant \frac{k-1+\sqrt{4m-k^2-1}}{2}$$
with equality if and only if $G\cong K_k\vee(\frac{m}{k}-\frac{k-1}{2})K_1$.
\end{conjecture}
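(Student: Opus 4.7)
The plan is to follow the $k$-core technique of Li, Zhai and Shu \cite{LZS}, combined with a stability and edge-counting argument. Suppose for contradiction that $G$ is an $F_k$-free graph with $m$ edges and $\rho(G)\geq \rho^*:=\frac{k-1+\sqrt{4m-k^2-1}}{2}$, yet $G\ncong K_k\vee(\frac{m}{k}-\frac{k-1}{2})K_1$. We may assume $G$ is connected and has no isolated vertices, since these operations do not decrease $\rho$ and an isolated component only gives a connected counter-example on $m'\leq m$ edges with $\rho^*(m')\leq \rho^*(m)$. Let $x$ be the positive Perron eigenvector normalized so that $x(u^*)=\max_{v}x(v)=1$. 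Because $\rho^*\sim \sqrt{m}+(k-1)/2$, the assumption forces $d(u^*)\geq \rho(G)=\Omega(\sqrt{m})$, so $u^*$ has very large degree.

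The first structural input is the $F_k$-free condition at $u^*$: since $F_k$ consists of a vertex together with $k$ vertex-disjoint triangles through it, $G$ being $F_k$-free is equivalent to the induced subgraph $G[N(v)]$ containing no matching of size $k$ for every $v$. Applied at $u^*$, taking the endpoints of a maximum matching gives a set $K\subseteq N(u^*)$ with $|K|\leq 2(k-1)$ whose complement $I:=N(u^*)\setminus K$ is independent in $G$. Writing $\rho=\sum_{v\in K}x(v)+\sum_{v\in I}x(v)$ and bounding the first sum by $|K|$ yields $\sum_{v\in I}x(v)\geq \rho-2(k-1)$, so almost all of the Perron mass adjacent to $u^*$ lies on the independent part $I$. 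Iterating the eigen-equation at the vertices of $I$ (each of which sees only $K\cup\{u^*\}$ plus a set of negligible total $x$-mass outside $N[u^*]$) gives quantitative control on both the structure of $K$ and on the edges leaving $N[u^*]$.

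The heart of the argument is the $k$-core reduction. Following the Li--Zhai--Shu peeling scheme, I would iteratively delete vertices of small eigenvector entry and apply the no-$k$-matching-in-neighborhood condition not only at $u^*$ but at every vertex of the remaining ``core'', in order to sharpen the vertex-cover estimate and shrink $K$ from size $2(k-1)$ down to exactly $k-1$ vertices forming a clique joined to every vertex of $I$. Together with $u^*$ this produces a $K_k$, and applying the matching condition at each of its vertices in turn forces every remaining vertex of $G$ to have its full neighborhood contained in this $K_k$; that is, $G$ embeds in $K_k\vee (n-k)K_1$. Edge-counting combined with the spectral hypothesis then forces $G=K_k\vee tK_1$ with $t=m/k-(k-1)/2$, contradicting the choice of $G$.

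The main obstacle will be sharpening the vertex-cover estimate from the naive $2(k-1)$ to the tight value $k-1$: the Erdős--Gallai / matching argument is loose by exactly a factor of two, and closing this gap is precisely the role of the $k$-core technique. The Perron vector serves as a weighting that distinguishes ``real'' clique vertices (those with $x(v)$ of order $1$) from ``fake'' vertices arising merely as matching endpoints, and the $F_k$-free condition applied at each high-weight vertex eliminates the extras one by one. Making this iteration quantitative, controlling error terms of order $1/\sqrt{m}$ throughout the peeling, and finally ruling out every non-extremal configuration by a local $\rho$-comparison (e.g.\ swapping an edge inside $I$ for a non-edge across the join) will be the most delicate bookkeeping of the proof.
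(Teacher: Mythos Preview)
Your setup is correct: normalizing the Perron vector at $u^*$, observing that $G[N_G(v)]$ is $kK_2$-free for every $v$, and noting $d(u^*)\geqslant\rho$ are all the right first moves, and they match the paper. However, the plan after that point misidentifies what the Li--Zhai--Shu $k$-core technique actually is, and the mechanism you propose for closing the ``factor-of-two'' vertex-cover gap is not the one that drives the argument.

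In the paper the $k$-core is applied \emph{inside} $G[U]$ (with $U=N_G(u^*)$): one takes the $(k-1)$-core $U^c$ by repeatedly deleting vertices of \emph{degree} at most $k-2$ in $G[U]$, not vertices of small eigenvector entry. The point of this peeling is the purely combinatorial monotonicity inequality $\eta_1(U)\leqslant\eta_1(U^c)$ for the weighted functional
\[
\eta_1(V)=\sum_{u\in V}(d_V(u)-k+1)x_u-e(V),
\]
together with the lower bound $\eta_1(U)\geqslant -\tfrac{1}{2}k(k-1)$ coming directly from $\rho^2-(k-1)\rho\geqslant m-\tfrac{1}{2}k(k-1)$. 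The components $H$ of $G[U^c]$ all have $\delta(H)\geqslant k-1$, and the sharp Erd\H{o}s--Gallai bound $e(H)\leqslant (k-1)|H|-\tfrac{1}{2}k(k-1)$ for $kK_2$-free graphs (Lemma~\ref{l1}) gives $\eta_1(V(H))\leqslant -\tfrac{1}{2}k(k-1)$ for each large component. Squeezing these two inequalities forces $|\mathcal{H}|=1$, $U=U^c$, and $G[U]\cong S_{|U|,k-1}$, with equality in the eigenvector bounds. No vertex-cover argument and no iteration of the matching condition at multiple core vertices is used; the $kK_2$-free condition is applied only at $u^*$ (and once more at the very end to show $W=\emptyset$).

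So the gap in your plan is twofold. First, your description of the peeling (``delete vertices of small eigenvector entry'') is not the $(k-1)$-core, and it is unclear what inequality would replace $\eta_1(U)\leqslant\eta_1(U^c)$ under that scheme. Second, and more seriously, the step ``apply the no-$k$-matching condition at every vertex of the core to shrink $K$ from $2(k-1)$ to $k-1$'' is where the real content lies, and you have not said what inequality or structural lemma would accomplish it; the paper avoids this issue entirely by never introducing a vertex cover, instead using the exact Erd\H{o}s--Gallai extremal bound for $kK_2$-free graphs, whose equality case is precisely $S_{n,k-1}$ and already has the right $k-1$. Without identifying $\eta_1$ (or an equivalent weighted edge/degree functional) and the $(k-1)$-core-of-$G[U]$ reduction, the proposal does not yet contain the key idea.
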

In \cite{YLP}, Yu, Li and Peng confirmed the conjecture in $k=3$. In this paper, we completely solve the conjecture in the following theorem.
\begin{theorem}\label{th1}
Let $k\geqslant 2$ and $m\geqslant 4k^4$. If $G$ is $F_k$-free with given size $m$, then
$$\rho(G)\leqslant \frac{k-1+\sqrt{4m-k^2-1}}{2}$$
with equality if and only if $G\cong K_k\vee(\frac{m}{k}-\frac{k-1}{2})K_1$.
\end{theorem}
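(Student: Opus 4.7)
The plan is to adapt the $k$-core technique of Li, Zhai and Shu to the friendship graph setting. I first verify that the candidate extremal graph $H^*:=K_k\vee tK_1$ with $t=m/k-(k-1)/2$ is $F_k$-free and attains the claimed spectral radius. It has $\binom{k}{2}+kt=m$ edges; at any central vertex the neighborhood has matching number $\lfloor (k-1)/2\rfloor$, and at any leaf vertex the neighborhood induces $K_k$ with matching number $\lfloor k/2\rfloor$, both strictly less than $k$, so $H^*$ is $F_k$-free. The equitable partition with cells $V(K_k)$ and $V(tK_1)$ has a $2\times 2$ quotient matrix whose Perron root is, after simplification, the right-hand side of Theorem~\ref{th1}, establishing the lower bound and the equality graph.

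For the upper bound, let $G$ be $F_k$-free with $m\geq 4k^4$ edges and $\rho:=\rho(G)\geq\rho^*$, and let $x$ be the Perron eigenvector normalized so that $x_{u^*}=\max_v x_v=1$. Two ingredients drive the proof. First, the eigenvalue equation iterated at $u^*$ gives
\[
\rho^{2}\;=\;d(u^*)\;+\;\sum_{v\sim u^*}\sum_{\substack{w\sim v\\ w\ne u^*}} x_w ,
\]
which splits naturally into contributions from the edges inside $N(u^*)$ and the edges leaving $N(u^*)$. Second, because $G$ is $F_k$-free, the induced subgraph $G[N(v)]$ has matching number at most $k-1$ for every vertex $v$ (otherwise $v$ together with $k$ disjoint edges in $N(v)$ forms $F_k$). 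The Erd\H{o}s--Gallai matching theorem then gives $e(G[N(v)])\leq (k-1)d(v)+\binom{k-1}{2}$. Plugging these bounds into the iterated eigenvalue equation, together with $\sum_{v\sim u^*} d(v)\leq 2m$, already yields $\rho^{2}\leq (k-1)\rho+m+O(k^{2})$, matching the target up to lower-order terms.

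To pin down the exact extremal structure, I would define the $k$-core $L:=\{v\in V(G): x_v\geq 1-\eta\}$ for an appropriately small $\eta=\eta(k,m)$. A two-sided argument forces $|L|=k$: if $|L|\geq k+1$, the concentration of eigenvector weight among many heavy vertices, combined with the $F_k$-free local bound, produces a forbidden subgraph or a direct contradiction with $\sum_{v\in L} x_v\leq\rho/(1-\eta)$; if $|L|\leq k-1$, the preceding upper bound on $\rho^{2}$ drops strictly below $\rho^{*2}$, contradicting $\rho\geq\rho^*$. Once $|L|=k$, retracing the estimates shows that every inequality used must be essentially tight, which forces $G[L]\cong K_k$, the set $V(G)\setminus L$ to be independent, and every vertex of $V(G)\setminus L$ to be adjacent to every vertex of $L$. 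This yields $G\cong K_k\vee sK_1$ with $s=|V(G)\setminus L|$, and the edge count pins $s=t$.

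The principal obstacle is the quantitative pinching of $|L|$ to exactly $k$: the Erd\H{o}s--Gallai bound loses $O(k^{2})$ additively, the cross-edges between $N(u^*)$ and its complement require a second application of the eigenvector equation to be controlled, and vertices of moderate eigenvector weight (neither heavy nor negligible) must be absorbed into the slack furnished by $m\geq 4k^{4}$. The final rigidity step, upgrading the approximate join structure to the exact graph $K_k\vee tK_1$, is then a standard stability argument based on strict comparison of spectral radii and uniqueness of the Perron eigenvector.
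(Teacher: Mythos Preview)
Your proposal conflates two different meanings of ``$k$-core,'' and this is where the real gap lies. The $k$-core technique of Li, Zhai and Shu is the \emph{graph-theoretic} $(k-1)$-core of $G[N(u^*)]$: iteratively delete vertices of degree at most $k-2$ inside the neighborhood of $u^*$ until the minimum degree is at least $k-1$. The paper then studies the connected components $H$ of this core and controls the potential
\[
\eta_1(V)=\sum_{u\in V}(d_V(u)-k+1)x_u-e(V),
\]
using the key monotonicity lemma $\eta_1(U)\le \eta_1(U^c)$ together with the Erd\H{o}s--Gallai bound $e(H)\le (k-1)|H|-\binom{k}{2}$ (note the sign: your bound $e(G[N(v)])\le (k-1)d(v)+\binom{k-1}{2}$ has the constant on the wrong side, and that $-\binom{k}{2}$ is exactly what produces the crucial $-\tfrac{1}{2}k(k-1)$ in the spectral inequality). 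A component-by-component analysis then forces a single large component isomorphic to $S_{|U|,k-1}$, and finally $W=\emptyset$.

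What you define instead, $L=\{v:x_v\ge 1-\eta\}$, is the heavy-eigenvector set from stability arguments of the Cioab\u{a}--Feng--Tait--Zhang type used for the \emph{order-$n$} problem. That machinery relies on knowing $n$ (to bound $|L|$, to count cross-edges, to run the final rigidity step), and it does not transfer cleanly to the size-$m$ setting where $n$ is unbounded. Your two-sided pinching of $|L|$ to exactly $k$ is asserted rather than argued: you do not explain why $|L|\ge k+1$ forces a forbidden subgraph (heavy eigenvector weight alone does not produce a $k$-matching in any neighborhood), nor why $|L|\le k-1$ drops $\rho^2$ strictly below $(\rho^*)^2$ once the $O(k^2)$ slack is present. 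You correctly identify this pinching as the principal obstacle, but the proposal gives no mechanism to overcome it; the paper avoids it entirely by never looking at a global heavy set and instead working purely inside $N(u^*)$ with the graph-theoretic core.
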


\section{Preliminaries}
Before proceeding, we first present some important results and definitions that are useful for our proof. Let $S_{n,k} = K_k\vee (n-k)K_1$.
\begin{lemma}\cite{EG}\label{l1}
Let $k\geqslant2$ be an integer. Then
$$ex(n,kK_{2})=\left\{
     \begin{array}{ll}
       (k-1)n-\frac{k(k-1)}{2}, & \hbox{if $n\geq(5k-2)/2$;} \\
       \binom{2k-1}{2}, & \hbox{if $2k-1\leq n<(5k-2)/2$.}
     \end{array}
 \right.$$
If\ $n > (5k-2)/2$, then $Ex(n,kK_2)$ = $\{S_{n,k-1}\}$; if\  $n = (5k-2)/2$, then $Ex(n,kK_2) = \ \ \{K_{2k-1}\cup(n-2k+1)K_1,S_{n,k-1}\}$; if\ $2k-1 \leq n < (5k-2)/2$, then $Ex(n,kK_2) = \{K_{2k-1}\cup(n-2k+1)K_1\}$.
\end{lemma}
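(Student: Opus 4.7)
The plan is to take an extremal $F_k$-free graph $G$ of size $m$ with $\rho = \rho(G) \geq \rho(S^*)$, where $S^* = K_k \vee tK_1$ and $t = (m/k) - (k-1)/2$, and show $G \cong S^*$. A direct check shows $S^*$ is $F_k$-free: the neighborhood of any vertex induces either $K_{k-1} \cup tK_1$ (for vertices in the $K_k$) or $K_k$ (for vertices in the independent part), and the maximum matching in either is at most $\lfloor k/2 \rfloor < k$. Computing the spectral radius of $S^*$ from its equitable partition yields the identity $\rho(S^*)^2 - (k-1)\rho(S^*) = m - \binom{k}{2}$, so the assumption $\rho \geq \rho(S^*)$ gives the quadratic lower bound $\rho^2 \geq (k-1)\rho + m - \binom{k}{2}$, i.e., $\rho = \sqrt{m} - O_k(1)$ when $m$ is large.

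The main structural tool is Lemma \ref{l1}. Since $G$ contains no $F_k$, for every vertex $v$ the subgraph $G[N(v)]$ is $kK_2$-free, and whenever $d(v) > (5k-2)/2$ Lemma \ref{l1} gives
$$e(G[N(v)]) \leq (k-1)d(v) - \binom{k}{2}.$$
Let $x$ be the Perron eigenvector of $G$, normalized by $x_{u^*} = \max_v x_v = 1$. Using the walk-counting identity $\rho^2 x_{u^*} = d(u^*) + \sum_{v \neq u^*}|N(u^*) \cap N(v)| x_v$, splitting the sum according to whether $v \in N(u^*)$ or $v \notin N[u^*]$, and applying the matching bound together with $x_v \leq 1$, I obtain a key inequality of the form
$$\rho^2 \leq (k-1)d(u^*) + (\text{contribution from paths of length two leaving } N[u^*]).$$
Combined with $\rho^2 \geq (k-1)\rho + m - \binom{k}{2}$, this forces $d(u^*)$ to be close to $t + k - 1 \approx m/k$, the core-degree in $S^*$.

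With $u^*$ identified as a heavy vertex, I invoke the $k$-core technique of Li, Zhai and Shu to recursively build a clique-like core $L$: start with $L_0 = \{u^*\}$, and at each step pick a vertex $v^*$ adjacent to all vertices already in $L_i$ with $x_{v^*}$ as close to $1$ as possible, and put $v^*$ into $L_{i+1}$. The $F_k$-freeness together with the neighborhood-matching bound restricts the growth, and a careful comparison against $\rho(S^*)$ shows that $L := L_{k-1}$ has exactly $k$ vertices and induces $K_k$. Every other vertex must then have Perron weight on the order of $k/\sqrt{m}$.

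Finally, I verify that $V \setminus L$ is independent and that every vertex of $V \setminus L$ is adjacent to all of $L$. An edge inside $V \setminus L$ would, together with the $K_k$ on $L$ and the large common $L$-neighborhood outside, supply $k$ vertex-disjoint triangles sharing a common vertex of $L$, producing $F_k$, a contradiction. If some vertex of $V \setminus L$ were not adjacent to all of $L$, an edge-shifting step (remove an edge within $V \setminus L$, add the missing $L$-edge) would strictly increase $\rho$ while preserving $m$ and $F_k$-freeness, contradicting extremality. Counting edges forces $|V \setminus L| = t$, so $G \cong S^*$. The hard part will be exactly this endgame: pinning down $|L| = k$ and ruling out edges outside $L$ require quantitative control from Lemma \ref{l1} and the Perron inequalities simultaneously, and this is where the hypothesis $m \geq 4k^4$ is used, to ensure that the relevant vertex degrees sit safely above the $(5k-2)/2$ threshold of Lemma \ref{l1} and that Perron weights separate the core from the outside with a definite gap.
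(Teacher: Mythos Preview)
Your proposal does not address the stated lemma at all. Lemma~\ref{l1} is the classical Erd\H{o}s--Gallai result on the Tur\'an number of a matching $kK_2$; the paper cites it from~\cite{EG} and does not prove it. What you have written is instead a sketch of the paper's main result, Theorem~\ref{th1}. So as a proof of the statement in question, the proposal is simply off-target.

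Even read as a plan for Theorem~\ref{th1}, your outline diverges from the paper in a way that contains a real gap. You describe the ``$k$-core technique'' as iteratively growing a clique $L_0\subset L_1\subset\cdots$ by adding high-Perron-weight common neighbours; that is not what the $k$-core is. The $(k-1)$-core $U^c$ of $G[U]$ is obtained by repeatedly \emph{deleting} vertices of degree at most $k-2$, and the paper's argument hinges on the monotonicity inequality $\eta_1(U)\le\eta_1(U^c)$ together with a component-by-component analysis of $G[U^c]$ (Lemmas~\ref{h1}--\ref{l5}), which forces $G[U]\cong S_{|U|,k-1}$. Your clique-growing procedure does not give this structure, and the ``edge-shifting'' endgame you propose (delete an edge inside $V\setminus L$, add a missing $L$-edge) is not well-defined once $V\setminus L$ is already independent, nor is $F_k$-freeness obviously preserved under such a swap. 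The paper instead closes by showing $W=\emptyset$ directly: equality throughout the chain forces $x_w=1$ and $N_G(w)=N_G(u^*)$ for any $w\in W$, and then an explicit $k$-matching in $N_G(u_1)$ produces an $F_k$.
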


The following lemma is known as Erd\H{o}s–Gallai theorem.
\begin{lemma}\cite{EG}\label{l2}
Let $k\geqslant 2$ and $G$ be an $n$ vertex $\{C_{\ell}:\ell>k\}$-free graph. Then $e(G)\leqslant \frac{1}{2}k(n-1)$, equality holds if and only if $n=t(k-1)+1$ and $G\cong K_1\vee (tK_{k-1})$.
\end{lemma}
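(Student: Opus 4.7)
The plan is to prove this classical Erd\H{o}s--Gallai theorem by induction on $n$. The base case $n\leq k$ is immediate since $e(G)\leq\binom{n}{2}\leq\frac{k(n-1)}{2}$, with equality forcing $n=k$ and $G\cong K_k$, which is precisely the $t=1$ case of the claimed extremal graph.

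For the inductive step, I would first reduce to the $2$-connected case via block decomposition. If $G$ has a cut vertex, let $B_1,\ldots,B_s$ be its blocks with $n_i=|V(B_i)|$; each $B_i$ inherits the $\{C_\ell:\ell>k\}$-free property, so the induction hypothesis gives $e(B_i)\leq\frac{k(n_i-1)}{2}$. Summing and using $\sum_i(n_i-1)=n-1$ yields $e(G)\leq\frac{k(n-1)}{2}$. For a $2$-connected $G$, I would invoke Dirac's classical bound that the circumference of a $2$-connected graph is at least $\min(n,2\delta(G))$; since $G$ has circumference at most $k$, either $n\leq k$ (already handled) or $\delta(G)\leq k/2$. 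In the latter case, removing a vertex $v$ of minimum degree and applying induction to the $\{C_\ell:\ell>k\}$-free graph $G-v$ gives
$$e(G)=e(G-v)+\deg(v)\leq\frac{k(n-2)}{2}+\frac{k}{2}=\frac{k(n-1)}{2}.$$

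For the equality characterization, I would trace when each step is tight. Tightness in the block decomposition forces every block to be extremal on its own vertex set; tightness in the $2$-connected sub-analysis (together with the base case) forces each such extremal $2$-connected piece to be exactly $K_k$. Assembling these $K_k$-blocks using the count $n=t(k-1)+1$ and identifying the unique common cut vertex then produces $G\cong K_1\vee tK_{k-1}$. The main technical obstacle is the equality portion: one must carefully verify that any graph saturating the bound possesses this precise book-of-cliques structure and rule out alternative block-tree arrangements with the same edge count, which requires revisiting the induction with equality constraints propagated at every level.
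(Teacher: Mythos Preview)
The paper does not prove this lemma; it is quoted from Erd\H{o}s--Gallai \cite{EG}, so there is no in-paper argument to compare against. Your induction for the inequality $e(G)\le\frac{k(n-1)}{2}$ via block decomposition together with Dirac's circumference bound (a $2$-connected graph has a cycle of length at least $\min(n,2\delta)$) is the standard route and is correct. One small omission: you should also observe that a disconnected $G$ satisfies the bound strictly, which follows from the same block count since then $\sum_i(n_i-1)=n-c<n-1$.

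The genuine gap is in the equality analysis, and it is not one you can close, because the ``only if'' direction as stated is actually \emph{false}. Your own outline already shows that equality forces $G$ to be connected with every block a copy of $K_k$; but nothing forces those blocks to share a single cut vertex. Concretely, for $k=3$ the path of triangles on vertex sets $\{1,2,3\}$, $\{3,4,5\}$, $\{5,6,7\}$ has $n=7$, $e=9=\tfrac{3\cdot6}{2}$, and circumference $3$, yet its maximum degree is $4$, so it is not isomorphic to $K_1\vee 3K_2$. The correct equality characterization (due to Kopylov and Woodall) is that $G$ is connected and each block of $G$ is $K_k$; the book $K_1\vee tK_{k-1}$ is only one configuration among many. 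Hence the step where you hope to ``rule out alternative block-tree arrangements with the same edge count'' must fail. Fortunately the paper's uses of this lemma, in Lemmas~\ref{h3} and~\ref{l5}, invoke only the inequality, so the misstated equality clause does not affect Theorem~\ref{th1}.
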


For convenience, we will use $\rho$ and $\bm{x}$ to denote the spectral radius and the positive vector of $G$. Suppose $x_{u^*}=\max\{x_v:v\in V(G)\}=1$. Furthermore, we denote $U=N_G(u^*)$, $W=V(G)\setminus N_G[u^*]$ and $d_U(u) = |N_U(u)|$ for a vertex $u\in V(G)$. Since
\begin{equation}\label{e2}
\rho^2=\rho^2x_{u^*}=d_G(u^*)+\sum_{u\in U}d_U(u)x_u+\sum_{w\in W}d_U(w)x_w,
\end{equation}
which together with $\rho=\rho x_{u^*}=\sum_{u\in U}x_u$ yields
\begin{equation}\label{e3}
\rho^2-(k-1)\rho=d_G(u^*)+\sum_{u\in U}(d_U(u)-k+1)x_u+\sum_{w\in W}d_U(w)x_w.
\end{equation}

We shall now present the definition of the $k$-core, which will form the basis of our proof. A $k$-core of a graph G is defined as the largest induced subgraph of $G$ such that its minimum degree is at least $k$. A $k$-core is obtained from $G$ by removing vertices of degree at most $k-1$ until the minimum degree is at least $k$. In \cite{PSW}, we can know that the $k$-core is well-defined, which is not affected by the order of vertex deletion.

We denote by $V^c$ the vertex set of the $(k-1)$-core of $G[V]$. For an arbitrary vertex set $V$, we define $e(V)=e(G[V])$ and
$$\eta_1(V)=\sum_{u\in V}(d_V(u)-k+1)x_u-e(V).$$
If $V=\emptyset$, then we define $\eta_1(V)=0$.
\begin{lemma}\cite{LZS}\label{vc}
For every subset $V$ of $U$, we have $\eta_1(V) \leqslant \eta_1(V^c)$, where the equality holds if and only if $V=V^c$.
\end{lemma}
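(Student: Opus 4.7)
The plan is to verify the inequality by showing that each step of the $(k-1)$-core peeling procedure strictly increases $\eta_1$. Recall that $V^c$ is obtained from $G[V]$ by iteratively deleting a vertex whose degree in the current induced subgraph is at most $k-2$, and that (by the result cited from \cite{PSW}) the resulting set is independent of the order of deletions. It therefore suffices to prove the one-step claim: if $v\in V$ satisfies $d_V(v)\leqslant k-2$ and $V'=V\setminus\{v\}$, then $\eta_1(V')>\eta_1(V)$; iterating along any peeling sequence then gives $\eta_1(V)\leqslant\eta_1(V^c)$ with equality exactly when the sequence is empty, i.e.\ $V=V^c$.

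To establish the one-step claim I would compute $\eta_1(V')-\eta_1(V)$ directly. Split the difference into three contributions: the term for $v$ itself disappears, contributing $-(d_V(v)-k+1)x_v$; each neighbor $u\in N_V(v)$ loses exactly one from its induced degree, contributing $-\sum_{u\in N_V(v)}x_u$; and the edge count decreases by exactly $d_V(v)$, so $-e(V')+e(V)=d_V(v)$. Collecting these gives
\begin{equation*}
\eta_1(V')-\eta_1(V)\;=\;(k-1-d_V(v))\,x_v\;+\;d_V(v)\;-\;\sum_{u\in N_V(v)}x_u.
\end{equation*}

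Two elementary bounds now finish the proof. First, since $V\subseteq U\subseteq V(G)$ and $x_{u^*}=1$ is the maximum entry of $\bm{x}$, we have $x_u\leqslant 1$ for every $u\in V$, so $\sum_{u\in N_V(v)}x_u\leqslant d_V(v)$. Second, since $d_V(v)\leqslant k-2$ the coefficient $k-1-d_V(v)\geqslant 1$. Combining, $\eta_1(V')-\eta_1(V)\geqslant x_v>0$. Iterating this strict increase along a peeling sequence from $V$ down to $V^c$ gives $\eta_1(V)\leqslant\eta_1(V^c)$, and the inequality is strict whenever at least one vertex is peeled off, i.e.\ whenever $V\neq V^c$.

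There is no serious obstacle here; the proof is a bookkeeping calculation for one deletion followed by induction on the number of deletions. The only subtle points are to keep the signs straight in the expansion of $\eta_1(V')-\eta_1(V)$ and to remember that the normalization $x_{u^*}=1$ is what supplies the crucial bound $x_u\leqslant 1$ on $V\subseteq U$, which in turn controls the sum $\sum_{u\in N_V(v)}x_u$ by the low-degree assumption $d_V(v)\leqslant k-2$.
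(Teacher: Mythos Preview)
Your argument is correct: the one-step computation of $\eta_1(V')-\eta_1(V)$ is accurate, and the bounds $x_u\leqslant 1$ and $k-1-d_V(v)\geqslant 1$ together with $x_v>0$ give a strict increase at every peel, so induction along any deletion sequence yields the claim with the right equality characterization. Note, however, that the present paper does not actually prove this lemma at all; it merely quotes it from \cite{LZS}, so there is no in-paper argument to compare against---your peeling calculation is precisely the standard proof one would expect in that reference.
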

\section{Proof of Theorem \ref{th1}}
Let $G$ be the maximum spectral radius of $F_k$-free graphs with $m$ edges and $\rho$ is the spectral radius corresponding with positive eigenvector $\bm{x}$. Suppose $x_{u^*}=\max\{x_v:v\in V(G)\}=1$. Noting that $\rho\geqslant\rho(K_k\vee(\frac{m}{k}-\frac{k-1}{2})K_1)=\frac{k-1+\sqrt{4m-k^2-1}}{2}$, we have
\begin{equation}\label{e1}
\rho^2-(k-1)\rho\geqslant m-\frac{1}{2}k(k-1),
\end{equation}
Otherwise, $\rho^2-(k-1)\rho< m-\frac{1}{2}k(k-1)$, which is a contradiction to the assumption of $G$.
Combining inequalities (\ref{e3}) and (\ref{e1}), we have
\begin{align}
&m-\frac{1}{2}k(k-1)\notag\\
=&d_G(u^*)+e(U)+e(W)+e(U,W)-\frac{1}{2}k(k-1)\notag\\
\leqslant&d_G(u^*)+\sum_{u\in U}(d_U(u)-k+1)x_u+\sum_{w\in W}d_U(w)x_w\notag\\
\leqslant&d_G(u^*)+\sum_{u\in U}(d_U(u)-k+1)x_u+e(U,W).
\end{align}
Furthermore, we obtain
\begin{equation}\label{e5}
\eta_1(U)=\sum_{u\in U}(d_U(u)-k+1)x_u-e(U)\geqslant e(W)-\frac{1}{2}k(k-1)\geqslant -\frac{1}{2}k(k-1).
\end{equation}
If $\eta_1(U)=-\frac{1}{2}k(k-1)$, then $W=\emptyset$ or $e(W)=0$ and $x_w=1$ for each $w\in W$.

Let $\mathcal{H}$ be the family of connected components in $G[U^c]$ and $|\mathcal{H}|$ be the number of
members in $\mathcal{H}$. By the definition of $(k-1)$-core, for each $H \in \mathcal{H}$, we have $\delta(H) \geqslant k-1$, and then $|H|\geqslant k$. Let $\mathcal{H}_1=\{H\in \mathcal{H}:|H|>\frac{5k-2}{2}\}$. Hence $\mathcal{H}\setminus \mathcal{H}_1=\{H\in \mathcal{H}:k\leqslant |H|\leqslant\frac{5k-2}{2}\}$.

\begin{lemma}\label{h1}
For each $H\in \mathcal{H}_1$, we have $\eta_1(V(H))\leqslant -\frac{1}{2}k(k-1)$, with equality holds if and only if $x_u=1$ for each $u\in V(H)$ with $d_H(u)\geqslant k$ and $H\cong S_{|H|,k-1}$.
\end{lemma}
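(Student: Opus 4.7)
The plan is to exploit $V(H) \subseteq U = N_G(u^*)$ together with the $F_k$-freeness of $G$ to force $H$ to contain no matching of size $k$, then invoke the Erd\H{o}s--Gallai matching theorem (Lemma \ref{l1}) to bound $e(H)$, and finally use $x_u \leqslant 1$ to convert this into a bound on $\eta_1(V(H))$.

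\emph{Step 1 ($H$ is $kK_2$-free).} Since $U^c$ is the $(k-1)$-core of $G[U]$, we have $V(H) \subseteq U^c \subseteq U = N_G(u^*)$, so every vertex of $H$ is adjacent to $u^*$. If $H$ contained $k$ pairwise disjoint edges $u_1v_1, \ldots, u_kv_k$, then each $u_iv_i$ together with $u^*$ would form a triangle, and these $k$ triangles would intersect pairwise only at $u^*$, producing a copy of $F_k$ in $G$ — contradicting the $F_k$-freeness of $G$. Because $|V(H)| > (5k-2)/2$, Lemma \ref{l1} then gives
$$e(H) \leqslant (k-1)|V(H)| - \tfrac{1}{2}k(k-1),$$
with equality if and only if $H \cong S_{|V(H)|,k-1}$.

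\emph{Step 2 (assembling the bound on $\eta_1$).} As $H$ is a connected component of $G[U^c]$ and $\delta(G[U^c]) \geqslant k-1$, every $u \in V(H)$ satisfies $d_H(u) = d_{G[U^c]}(u) \geqslant k-1$, so $d_H(u)-k+1 \geqslant 0$. Using $x_u \leqslant 1$ coordinatewise,
$$\sum_{u \in V(H)} (d_H(u)-k+1)\,x_u \;\leqslant\; \sum_{u \in V(H)} (d_H(u)-k+1) \;=\; 2e(H) - (k-1)|V(H)|.$$
Substituting into the definition of $\eta_1(V(H))$ and applying the Erd\H{o}s--Gallai bound gives
$$\eta_1(V(H)) \;\leqslant\; e(H) - (k-1)|V(H)| \;\leqslant\; -\tfrac{1}{2}k(k-1).$$
Equality throughout requires both $H \cong S_{|V(H)|,k-1}$ (from the Erd\H{o}s--Gallai step) and $x_u = 1$ whenever $d_H(u)-k+1 > 0$, i.e.\ whenever $d_H(u) \geqslant k$, yielding exactly the stated characterization. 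The only substantive step is the matching-to-$F_k$ reduction, which is the sole place where the hypothesis on $G$ is used; once $H$ has been shown to be $kK_2$-free, the rest is routine bookkeeping combining Lemma \ref{l1} with the Perron normalization $x_{u^*}=1$.
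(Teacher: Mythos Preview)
Your proof is correct and follows essentially the same route as the paper: show $H$ is $kK_2$-free via the $F_k$-freeness of $G$, apply Lemma~\ref{l1} to bound $e(H)$, and then use $d_H(u)\geqslant k-1$ together with $x_u\leqslant 1$ to obtain $\eta_1(V(H))\leqslant e(H)-(k-1)|V(H)|\leqslant -\tfrac12 k(k-1)$. Your write-up is in fact slightly more careful than the paper's in making explicit why $d_H(u)-k+1\geqslant 0$ is needed for the inequality $(d_H(u)-k+1)x_u\leqslant d_H(u)-k+1$.
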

\begin{proof}
Since $G$ is $F_k$-free, we have $G[U]$ is $kK_2$-free, and then $H$ is also $kK_2$-free. Combining the assumption $|H|>\frac{5k-2}{2}$ and Lemma \ref{l1}, we have
$$e(H)\leqslant (k-1)|H|-\frac{1}{2}k(k-1),$$
with equality holds if and only if $H\cong S_{|H|,k-1}$. Furthermore, we have
\begin{align*}
\eta_1(V(H))&=\sum_{u\in V(H)}(d_H(u)-k+1)x_u-e(H)\\
&\leqslant \sum_{u\in V(H)}(d_H(u)-k+1)-e(H)\\
&=e(H)-(k-1)|H|\leqslant-\frac{1}{2}k(k-1),
\end{align*}
with equality holds if and only if $x_u=1$ for each $u\in V(H)$ with $d_H(u)\geqslant k$ and $H\cong S_{|H|,k-1}$.
\end{proof}

Since $G[U]$ is $kK_2$-free, we have $G[U]$ does not contain any cycle of length large than $2k-1$, and then $H$ does not contain any cycle of length large than $2k-1$ for any $H\in \mathcal{H}$. Let $\mathcal{H}_2=\{H\in \mathcal{H}:k\leqslant |H|\leqslant\frac{5k-2}{2} \text{and}\ H\ \text{contains}\ C_{2k-1}\ \text{as a subgraph}\}$ and $\mathcal{H}_3=\mathcal{H}\setminus (\mathcal{H}_1\cup \mathcal{H}_2)$. Hence, for any graph $H\in \mathcal{H}_3$, $H$ does not contain $C_{2k-1}$ as a subgraph. Furthermore, for any graph $H\in \mathcal{H}_3$, $H$ does not contain any cycle of length large than $2k-2$.

Let $H\in \mathcal{H}$ and denote by $\tilde{H}$ the subgraph of $G$ induced by $\cup_{u\in V(H)}N_U(u)$. Hence, $H$ is the $(k-1)$-core of $\tilde{H}$ and $V(H)=(V(\tilde{H}))^c$.

\begin{lemma}\label{h2}
$\mathcal{H}_2=\emptyset$.
\end{lemma}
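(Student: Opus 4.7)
The plan is to argue by contradiction: supposing some $H\in\mathcal{H}_2$ exists, I will either build a matching of size $k$ in $G[U]$, which via $u^*$ yields $k$ edge-disjoint triangles through $u^*$ and hence an $F_k$ in $G$, or I will force $\rho$ to be bounded by a constant depending only on $k$, which is incompatible with $m\geqslant 4k^4$. The master fact used throughout is $\nu(G[U])\leqslant k-1$; otherwise a $k$-matching in $G[U]$ together with $u^*$ gives the forbidden $F_k$.

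Fix $H\in\mathcal{H}_2$ and a copy $C=v_1v_2\cdots v_{2k-1}v_1$ of $C_{2k-1}$ in $H$; in particular $|V(H)|\geqslant 2k-1$. I first dispatch two ``matching cases''. If $|V(H)|\geqslant 2k$, pick $w\in V(H)\setminus V(C)$. Since $|V(H)\setminus V(C)|\leqslant\frac{5k-2}{2}-(2k-1)=\frac{k}{2}$, at most $\frac{k-2}{2}$ of $w$'s (at least $k-1$) neighbours in $H$ lie outside $V(C)$, so $w$ has some neighbour $v_i\in V(C)$. The path $C-v_i\cong P_{2k-2}$ carries a perfect matching of size $k-1$, and prepending $wv_i$ gives a $k$-matching in $H\subseteq G[U]$, contradiction. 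If instead $|V(H)|=2k-1$ (so $V(H)=V(C)$) and some edge $vu\in E(G[U])$ has $v\in V(H)$, $u\in U\setminus V(H)$, the same construction---a perfect matching of $C-v$ together with $vu$---again yields a $k$-matching in $G[U]$, contradiction.

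This leaves the subcase $|V(H)|=2k-1$ with no edge of $G[U]$ between $V(H)$ and $I:=U\setminus V(H)$. Since $C\subseteq H$ we have $\nu(H)\geqslant k-1$, and $\nu(G[U])\leqslant k-1$ then forces $G[I]$ to have no edges (any edge inside $I$ would combine with the $(k-1)$-matching of $C$ to form a $k$-matching). Hence $e(G[U])=e(H)\leqslant\binom{2k-1}{2}$, while $d_U(u)=d_H(u)\geqslant k-1$ for $u\in V(H)$ and $d_U(u)=0$ for $u\in I$. Combining $x_u\leqslant 1$, $d_H(u)-k+1\geqslant 0$, and the inequality $\eta_1(U)\geqslant-\tfrac{k(k-1)}{2}$ from (\ref{e5}), I will expand
\begin{align*}
-\tfrac{k(k-1)}{2}\leqslant\eta_1(U)
&=\sum_{u\in V(H)}\bigl(d_H(u)-k+1\bigr)x_u-(k-1)\sum_{u\in I}x_u-e(H)\\
&\leqslant\bigl(2e(H)-(k-1)(2k-1)\bigr)-(k-1)\sum_{u\in I}x_u-e(H),
\end{align*}
and, after using $e(H)\leqslant(k-1)(2k-1)$, rearrange to $\sum_{u\in I}x_u\leqslant\tfrac{k}{2}$. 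Consequently
$$\rho=\sum_{u\in U}x_u\leqslant|V(H)|+\sum_{u\in I}x_u\leqslant(2k-1)+\tfrac{k}{2}=\tfrac{5k-2}{2},$$
which contradicts $\rho\geqslant\tfrac{k-1+\sqrt{4m-k^2-1}}{2}$: for $m\geqslant 4k^4$ and $k\geqslant 2$ a short estimate yields $\rho\geqslant 2k^2+\tfrac{k-2}{2}>\tfrac{5k-2}{2}$.

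The main technical obstacle is this last subcase: once every ``escape edge'' from $V(H)$ has been forbidden, the matching argument no longer produces $F_k$, and one must push the $k$-core inequality (\ref{e5}) together with tight control of $e(H)$ to confine the eigenvector mass on $I$ to an absolute constant---this is precisely the step for which the hypothesis $m\geqslant 4k^4$ is needed.
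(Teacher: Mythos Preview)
Your proof is correct and follows essentially the same approach as the paper: both first use a matching argument to reduce to the case $V(H)=V(C_{2k-1})$ with $U\setminus V(H)$ isolated in $G[U]$, then bound $\eta_1(U)$ via $e(H)\leqslant\binom{2k-1}{2}$ to force $\sum_{u\in U\setminus V(H)}x_u\leqslant k/2$, and finally derive $\rho\leqslant\frac{5k-2}{2}$, contradicting $\rho>\sqrt{m}\geqslant 2k^2$. A small bonus of your write-up is that in the case $|V(H)|\geqslant 2k$ you use the minimum-degree condition of the $(k-1)$-core to locate a neighbour of $w$ on $C$, whereas the paper appeals to connectivity of $H$ and claims adjacency directly (which, as literally stated, needs a one-line fix via a closest-vertex argument).
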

\begin{proof}
Suppose to the contrary that $|\mathcal{H}_2|\geqslant 1$. Let $H\in \mathcal{H}_2$. Then $H$ contains a $C_{2k-1}$ as a subgraph and then we assume that $C_{2k-1}=v_1v_2\cdots v_{2k-1}v_1$. We assert that $V(H)=\{v_1,\cdots,v_{2k-1}\}$, otherwise there is a vertex $v\in V(H)\setminus\{v_1,\cdots,v_{2k-1}\}$. Since $H$ is connected, we have $v$ is adjacent to at least one vertex of $\{v_1,\cdots,v_{2k-1}\}$, say $v_1$. Then, we can find that $\{v_2v_3,v_4v_5,\cdots,v_{2k-2}v_{2k-1},vv_1\}$ is a $k$-matching in $H$, which is a contradiction to $G[U]$ is $kK_2$-free. Similarly, we can get $V(\tilde{H})=V(H)=\{v_1,\cdots,v_{2k-1}\}$.

Next, we claim that $U\setminus V(H)$ is an independent set. Otherwise, there is an edge $xy$ in $G[U\setminus V(H)]$, and then we also can find a $k$-matching $\{v_2v_3,v_4v_5,\cdots,v_{2k-2}v_{2k-1},xy\}$ in $G[U]$, which is a contradiction. Hence, we can know that $U\setminus V(H)$ is an independent set. Furthermore, $e(U)= e(H)\leqslant \binom{2k-1}{2}$ and $d_U(u)=0$ for any $u\in U\setminus V(H)$. Therefore
\begin{align*}
\eta_1(U)&=\sum_{u\in U}(d_U(u)-k+1)x_u-e(U)\\
&=\sum_{u\in V(H)}(d_H(u)-k+1)x_u-e(H)+\sum_{u\in U\setminus V(H)}(d_U(u)-k+1)x_u\\
&\leqslant \sum_{u\in V(H)}(d_H(u)-k+1)-e(H)-(k-1)\sum_{u\in U\setminus V(H)}x_u\\
&= e(H)-(k-1)|H|-(k-1)\sum_{u\in U\setminus V(H)}x_u\\
&\leqslant -(k-1)\sum_{u\in U\setminus V(H)}x_u.
\end{align*}
Combining inequality (\ref{e5}), we have $\sum_{u\in U\setminus V(H)}x_u\leqslant\frac{1}{2}k$. Hence, we have
$$\rho=\rho x_{u^*}=\sum_{u\in U}x_u=\sum_{u\in V(H)}x_u+\sum_{u\in U\setminus V(H)}x_u\leqslant |H|+\frac{1}{2}k=\frac{5}{2}k+1.$$
Recall that $m\geqslant 4k^4$. In view of (\ref{e1}), we have $\rho> \sqrt{m}\geqslant 2k^2$. Then we get a contradiction. Hence, $\mathcal{H}_2=\emptyset$.
\end{proof}

\begin{lemma}\label{h3}
$\eta_1(V(H))\leqslant-(k-1)$ for any $H\in \mathcal{H}_3$.
\end{lemma}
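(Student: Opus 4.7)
The plan is to combine two ingredients already on the table: the Erdős–Gallai circumference bound (Lemma \ref{l2}) to control $e(H)$, and the defining property of the $(k-1)$-core to control the degree coefficients $d_H(u)-k+1$.

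First I would note that, by the definition of $\mathcal{H}_3$, any $H\in\mathcal{H}_3$ contains no $C_{2k-1}$. Coupled with the observation (already used in Lemma \ref{h2}) that the $kK_2$-freeness of $G[U]$ forces $H$ to have no cycle of length larger than $2k-1$, we conclude that $H$ is $\{C_\ell : \ell > 2k-2\}$-free. Applying Lemma \ref{l2} with parameter $2k-2$ (valid since $k\geq 2$), this yields the edge bound
\[
e(H)\leqslant \tfrac{1}{2}(2k-2)(|H|-1)=(k-1)(|H|-1).
\]

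Next I would exploit that $H$ is a connected component of the $(k-1)$-core $G[U^c]$, so every $u\in V(H)$ satisfies $d_H(u)\geqslant k-1$, hence $d_H(u)-k+1\geqslant 0$. Combined with $x_u\leqslant x_{u^*}=1$, this lets me drop the weights $x_u$ from above:
\[
\eta_1(V(H))=\sum_{u\in V(H)}(d_H(u)-k+1)x_u-e(H)\leqslant \sum_{u\in V(H)}(d_H(u)-k+1)-e(H)=e(H)-(k-1)|H|.
\]
Plugging in the Erdős–Gallai bound gives $\eta_1(V(H))\leqslant (k-1)(|H|-1)-(k-1)|H|=-(k-1)$, as desired.

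I do not anticipate a genuine obstacle: the argument is essentially forced once one recognizes that the two definitional constraints on $\mathcal{H}_3$ (membership in a $(k-1)$-core and absence of $C_{2k-1}$, together with the ambient $kK_2$-freeness) are precisely what is needed to activate Lemma \ref{l2} and to make the degree coefficients nonnegative. The only small thing to keep straight is the parameter substitution $k\mapsto 2k-2$ when invoking Erdős–Gallai, so that the $\frac{1}{2}k(n-1)$ in the lemma statement becomes $(k-1)(|H|-1)$ here.
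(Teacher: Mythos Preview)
Your argument is correct and matches the paper's proof essentially line for line: first invoke the Erd\H{o}s--Gallai bound $e(H)\leqslant (k-1)(|H|-1)$ using that $H$ has no cycle of length $>2k-2$, then use $d_H(u)\geqslant k-1$ and $x_u\leqslant 1$ to reduce the weighted sum to $e(H)-(k-1)|H|$. You have even spelled out more carefully than the paper why the degree coefficients are nonnegative and why the parameter in Lemma~\ref{l2} becomes $2k-2$.
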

\begin{proof}
Recall that $H$ does not contain any cycle of length large than $2k-2$ for any graph $H\in \mathcal{H}_3$. By Lemma \ref{l2}, we have $e(H)\leqslant(k-1)(|H|-1)$. Furthermore,
\begin{align*}
\eta_1(V(H))&=\sum_{u\in V(H)}(d_H(u)-k+1)x_u-e(H)\\
&\leqslant e(H)-(k-1)|H|\\
&\leqslant(k-1)(|H|-1)-(k-1)|H|\\
&=-(k-1).
\end{align*}
\end{proof}

Let $T=\{u\in U\setminus U^c:d_U(u)\leqslant k-2\}$ and $S=U\setminus(U^c\cup T)$. Denote $s=|S|$, $t=|T|$.
\begin{lemma}\label{l5}
$\mathcal{H}_3=\emptyset$ and $\mathcal{H}_1\neq \emptyset$.
\end{lemma}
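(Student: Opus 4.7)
The plan is to combine the estimates of $\eta_1(V(H))$ on $\mathcal{H}_1$ and $\mathcal{H}_3$ with the monotonicity of $\eta_1$ under $(k-1)$-core peeling, reducing everything to the single numerical inequality $\tfrac{k}{2}|\mathcal{H}_1| + |\mathcal{H}_3| \leq \tfrac{k}{2}$, and then to rule out $\mathcal{H}_1 = \emptyset$ by comparing $\rho$ with the lower bound $\rho > \sqrt{m} \geq 2k^2$ already used in the proof of Lemma \ref{h2}.

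First I would combine (\ref{e5}), Lemma \ref{vc}, Lemma \ref{h1}, Lemma \ref{h2}, and Lemma \ref{h3} via the identity $\eta_1(U^c) = \sum_{H \in \mathcal{H}}\eta_1(V(H))$ to obtain
$$-\tfrac{k(k-1)}{2} \leq \eta_1(U) \leq \eta_1(U^c) \leq -\tfrac{k(k-1)}{2}|\mathcal{H}_1| - (k-1)|\mathcal{H}_3|,$$
which rearranges to $\tfrac{k}{2}|\mathcal{H}_1|+|\mathcal{H}_3| \leq \tfrac{k}{2}$. In particular $|\mathcal{H}_1| \leq 1$, and $|\mathcal{H}_1| \geq 1$ already forces $|\mathcal{H}_3| = 0$; so the whole statement follows as soon as $\mathcal{H}_1 \neq \emptyset$ is established.

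For that I would sharpen Lemma \ref{vc} quantitatively. When a vertex $u$ of current degree $a \leq k-2$ is peeled from $V$ to give $V' = V \setminus \{u\}$, a direct calculation yields
$$\eta_1(V') - \eta_1(V) = (k-1-a)x_u + a - \sum_{v\sim u,\, v\in V'} x_v \geq (k-1-a)x_u \geq x_u,$$
since every $x_v \leq 1$ and $a \leq k-2$. Iterating over all vertices of $S \cup T$ gives the refined inequality $\eta_1(U) \leq \eta_1(U^c) - \sum_{u \in S \cup T} x_u$. Assuming now $\mathcal{H}_1 = \emptyset$, the first step gives $|\mathcal{H}_3| \leq k/2$, hence $|U^c| \leq \tfrac{k(5k-2)}{4}$; and the refined inequality together with $\eta_1(U^c) \leq 0$ and (\ref{e5}) yields $\sum_{u \in S\cup T}x_u \leq \tfrac{k(k-1)}{2}$. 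Summing,
$$\rho = \sum_{u \in U} x_u \leq \tfrac{k(5k-2)}{4} + \tfrac{k(k-1)}{2} = \tfrac{7k^2 - 4k}{4} < 2k^2,$$
contradicting $\rho > \sqrt{m} \geq 2k^2$ for $m \geq 4k^4$. Hence $\mathcal{H}_1 \neq \emptyset$, and by the first step $|\mathcal{H}_1| = 1$ and $\mathcal{H}_3 = \emptyset$.

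The main obstacle is the quantitative strengthening of Lemma \ref{vc}: its stated form $\eta_1(U) \leq \eta_1(U^c)$ alone is not enough to pin down $\rho$, and one genuinely needs the surplus $\sum_{u \in S\cup T} x_u$ produced by one extra line of the peeling calculation. Once that surplus is extracted, the remaining contradiction reduces to the elementary check $(7k^2-4k)/4 < 2k^2$ for every $k \geq 2$.
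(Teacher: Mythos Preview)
Your argument is correct and follows the same overall strategy as the paper: combine the $\eta_1$ estimates over $\mathcal{H}_1\cup\mathcal{H}_3$ with (\ref{e5}) to get the numerical constraint, and then rule out $\mathcal{H}_1=\emptyset$ by showing that in that case $\rho$ would fall below $\sqrt{m}\geq 2k^2$.

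The implementation differs in one place worth noting. To control the contribution of the non-core vertices $S\cup T$, the paper separates $S$ from $T$, proves the edge bound $e(S)+e(S,U^c)\leqslant (k-2)s$ by tracking the peeling order, feeds this together with $e(U^c)\leqslant (k-1)|U^c|$ into a direct expansion of $\eta_1(U)$, and obtains $\sum_{u\in T}x_u\leqslant \tfrac{1}{2}k(k-1)-s$ while bounding $\sum_{u\in S}x_u$ by $s$ trivially. Your route replaces all of this by the single incremental identity
\[
\eta_1(V\setminus\{u\})-\eta_1(V)=(k-1-a)x_u+\Bigl(a-\sum_{v\sim u,\,v\in V}x_v\Bigr)\geqslant x_u,
\]
valid at each peeling step since $a\leqslant k-2$ and $x_v\leqslant 1$, which telescopes to $\eta_1(U)\leqslant \eta_1(U^c)-\sum_{u\in S\cup T}x_u$ and gives $\sum_{u\in S\cup T}x_u\leqslant \tfrac{1}{2}k(k-1)$ in one stroke. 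This is a genuine simplification: it avoids the separate bookkeeping of $S$ versus $T$ and the auxiliary edge inequalities, and it makes transparent that the gain in Lemma~\ref{vc} is at least $x_u$ per peeled vertex. The final numerics ($\rho\leqslant (7k^2-4k)/4<2k^2$) match the paper's bound $\rho\leqslant \tfrac{7}{4}k^2$ up to lower-order terms, and the contradiction closes the same way.
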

\begin{proof}
By Lemma \ref{h2}, we have $U^c=\cup_{H\in \mathcal{H}_1\cup \mathcal{H}_3}V(H)$. By Lemmas \ref{vc}, \ref{h1}, and \ref{h3}, we have
$$\eta_1(U)\leqslant\eta_1(U^c)=\sum_{H\in \mathcal{H}_1}\eta_1(V(H))+\sum_{H\in \mathcal{H}_3}\eta_1(V(H))\leqslant-\frac{1}{2}k(k-1)|\mathcal{H}_1|-(k-1)|\mathcal{H}_3|.$$
Now, we will show that $\mathcal{H}_3=\emptyset$. Suppose to the contrary that $\mathcal{H}_3\neq\emptyset$. Then $\mathcal{H}_1=\emptyset$ and $|\mathcal{H}_3|\leqslant\frac{1}{2}k$, otherwise $\eta_1(U)<-\frac{1}{2}k(k-1)$ which is a contradiction to (\ref{e5}). Consequently, we have $U^c=\cup_{H\in \mathcal{H}_3}V(H)$. Recall that  $k\leqslant |H|\leqslant\frac{5k-2}{2}$ for any $H\in \mathcal{H}_3$, we can get $|U^c|\leqslant \frac{5k-2}{2}|\mathcal{H}_3|<\frac{5}{4}k^2$. Since $H$ does not contain any cycle of length large than $2k-2$ for any graph $H\in \mathcal{H}_3$, by Lemma \ref{l2}, we have
\begin{equation}\label{k1}
e(U^c)\leqslant (k-1)|U^c|.
\end{equation}

Recall that $U=U^c\cup S\cup T$. Next, we will show that
\begin{equation}\label{k2}
e(S)+e(S, U^c)\leqslant (k-2)s.
\end{equation}
If $S=\emptyset$, then the inequality holds trivially. Assume that $S\neq\emptyset$. Hence, $T\neq\emptyset$. Otherwise, for any $u\in U$, $d_U(u)\geqslant k-1$ and then $U=U^c$, which is a contradiction. Without loss of generality, assume that $T=\{u_1,\cdots,u_t\}$ and $S=\{u_{t+1},\cdots,u_{t+s}\}$. Since the $k$-core is well-defined, which is not affected by the order of vertex deletion. We can assume that the order in which the vertices are deleted is $u_1,\cdots,u_t,u_{t+1},\cdots,u_{t+s}$. Let $U_1=U$ and $U_i=U_{i-1}\setminus\{u_{i-1}\}$. Then we can get $d_{U_i}(u_i)\leqslant k-2$ for $i\in\{1,\cdots,s+t\}$ for $i\geqslant 2$. One can easily see that $e(S)+e(S, U^c)=\sum_{i=s+1}^{s+t}d_{U_i}(u_i)$ and then $e(S)+e(S, U^c)\leqslant (k-2)s$.

Observe that
\begin{align}\label{k3}
\eta_1(U)&=\sum_{u\in U}(d_U(u)-k+1)x_u-e(U)\notag\\
&=\sum_{u\in T}(d_U(u)-k+1)x_u+\sum_{u\in S\cup U^c}(d_U(u)-k+1)x_u-e(U)\notag\\
&\leqslant -\sum_{u\in T}x_u+\sum_{u\in S\cup U^c}d_U(u)-(k-1)(s+|U^c|)-e(U)\notag\\
&=-\sum_{u\in T}x_u+e(S)+e(S,U^c)+e(U^c)-(k-1)(s+|U^c|).
\end{align}
Combining inequalities (\ref{e5}), (\ref{k1}), (\ref{k2}), and (\ref{k3}), we can obtain
$$-\frac{1}{2}k(k-1)\leqslant \eta_1(U)\leqslant-\sum_{u\in T}x_u-s.$$
Hence, $\sum_{u\in T}x_u\leqslant \frac{1}{2}k(k-1)-s$. Recall that $|U^c|<\frac{5}{4}k^2$. Then
$$\rho=\rho x_u^*=\sum_{u\in U}x_u=\sum_{u\in T}x_u+\sum_{u\in S}x_u+\sum_{u\in U^c}x_u\leqslant \frac{1}{2}k(k-1)-s +s +\frac{5}{4}k^2\leqslant\frac{7}{4}k^2,$$
which is a contradiction to $\rho>\sqrt{m}\geqslant 2k^2$. Hence $\mathcal{H}_3=\emptyset$.

In the following, we will prove $\mathcal{H}_1\neq \emptyset$. Suppose to the contrary that $\mathcal{H}_1= \emptyset$. Then $U^c=\emptyset$ and $U=S\cup T$. Similarly to the proof (\ref{k2}), we can get $e(S)\leqslant (k-2)s$. Hence,
\begin{align*}\label{k3}
\eta_1(U)&=\sum_{u\in U}(d_U(u)-k+1)x_u-e(U)\\
&=\sum_{u\in T}(d_U(u)-k+1)x_u+\sum_{u\in S}(d_U(u)-k+1)x_u-e(U)\\
&\leqslant -\sum_{u\in T}x_u+\sum_{u\in S}d_U(u)-(k-1)s-e(U)\\
&=-\sum_{u\in T}x_u+e(S)-(k-1)s.
\end{align*}
Furthermore, we also can get $\sum_{u\in T}x_u\leqslant \frac{1}{2}k(k-1)-s$. Thus, we similarly get a contradiction. Hence, $\mathcal{H}_1\neq \emptyset$.
\end{proof}

\begin{proof}[\rm{\textbf{Proof of Theorem \ref{th1}}}]
By Lemmas \ref{l1}, \ref{h1}, \ref{h2}, and \ref{l5}, we have
$$\eta_1(U)\leqslant\eta_1(U^c)=\sum_{H\in \mathcal{H}_1}\eta_1(V(H))\leqslant -\frac{1}{2}k(k-1)|\mathcal{H}_1|.$$
Combining inequality (\ref{e5}), we will obtain $\eta_1(V(H))=-\frac{1}{2}k(k-1)$ for $H\in \mathcal{H}_1$ and $|\mathcal{H}_1|=1$.  By using the characterization of equality case in
Lemmas \ref{l2} and \ref{h2} respectively, we get that  $U=U^c=V(H)$, $G[U]\cong S_{|U|,k-1}$, and $x_u=1$ for each $u\in U$ with $d_U(u)\geqslant k$.

Now we may partition $U$ into $U_1$ and $U_2$ where $U_1$ is the vertex set of $K_{k-1}$ in $S_{|U|,k-1}$ and $U_2$ is the independent set in $S_{|U|,k-1}$. Without loss of generality, assume that $U_1=\{u_1,\cdots,u_{k-1}\}$ and $U_1=\{u_k,\cdots,u_{|U|}\}$. Since $|U|>\frac{5k-2}{2}$, we have $|U_2|>\frac{3}{2}k$.

Now it suffices to show that $W=\emptyset$. Suppose to the contrary that there is a vertex $w_0\in W$. Since inequality (\ref{e5}) holds in equality, we also have $e(W)=0$ and $x_w=1$ for each $w\in W$. Then $d_W(w_0)=0$ and $x_{w_0}=1$. Since $N_G(w)\subseteq N_G(u^*)$ and $x_{w_0}=x_{u^*}=1$, we have $N_G(w_0)=N_G(u^*)$. Then we can find a $k$-matching $\{uu_k,u_2u_{k+1},\cdots,u_{k-1}u_{2k-2},w_0u_{2k-1}\}$ in $N_G(u_1)$, and then $G$ contains $F_k$ as a subgraph, which is a contradiction to the assumption of $G$. Hence, we will know $W=\emptyset$. Therefore, $G\cong K_k\vee(\frac{m}{k}-\frac{k-1}{2})K_1$.
\end{proof}

\section{Concluding remarks}
In this paper, we have studied the Brualdi–Hoffman–Tur\'{a}n problem for friendship graph. We denote that $V_{k+1} = K_1\vee P_k$ the fan graph on $k + 1$ vertices. On the other hand, we would like to mention the following conjecture involving fan graph.

\begin{conjecture}\cite{YLP}
Let $k\geqslant 2$ and $m$ large enough. If $G$ is $V_{2k+1}$-free or $V_{2k+2}$-free with given size $m$, then
$$\rho(G)\leqslant \frac{k-1+\sqrt{4m-k^2-1}}{2}$$
with equality if and only if $G\cong K_k\vee(\frac{m}{k}-\frac{k-1}{2})K_1$.
\end{conjecture}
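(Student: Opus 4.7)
The plan is to adapt the $(k-1)$-core technique from the proof of Theorem~\ref{th1}, replacing the matching-based analysis (from $F_k$-freeness giving $G[U]$ is $kK_2$-free) with a path-based analysis (from $V_{2k+1}$-freeness giving $G[U]$ is $P_{2k}$-free, or $V_{2k+2}$-freeness giving $G[U]$ is $P_{2k+1}$-free). I describe the $V_{2k+1}$-free case; the $V_{2k+2}$-free case runs in parallel with the path threshold shifted by one. Let $G$ be an extremal $V_{2k+1}$-free graph of size $m$, $\bm{x}$ its Perron vector with $x_{u^*}=1$, $U=N_G(u^*)$, and $W=V(G)\setminus N_G[u^*]$. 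The derivation of inequality~(\ref{e5}) goes through unchanged, so $\eta_1(U)\geqslant -\tfrac{1}{2}k(k-1)$. The key new input is that, since $V_{2k+1}=K_1\vee P_{2k}$, $G[U]$ is $P_{2k}$-free and hence contains no cycle of length $\geqslant 2k$.

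Form the $(k-1)$-core $U^c$ with connected components $\mathcal{H}$ and split $\mathcal{H}=\mathcal{H}_1\cup\mathcal{H}_2\cup\mathcal{H}_3$ using the same size/long-cycle thresholds as in Section~3. I would then prove path-analogs of Lemmas~\ref{h1}--\ref{l5}: (i) for $H\in\mathcal{H}_1$, $\eta_1(V(H))\leqslant -\tfrac{1}{2}k(k-1)$ with equality iff $H\cong S_{|V(H)|,k-1}$; (ii) $\mathcal{H}_2=\emptyset$; (iii) for $H\in\mathcal{H}_3$, $\eta_1(V(H))\leqslant -(k-1)$ by Lemma~\ref{l2}; (iv) the $U^c$--$S$--$T$ decomposition of Lemma~\ref{l5} forces $\mathcal{H}_3=\emptyset$ and $|\mathcal{H}_1|=1$, so $G[U]\cong S_{|U|,k-1}$; and (v) if some $w\in W$ existed, equality in~(\ref{e5}) would force $N_G(w)=U$, and the alternating $P_{2k-1}$ inside $S_{|U|,k-1}$ (chosen with both ends on the independent side) extended by $w$ at one end would yield a $P_{2k}$ in $N_G(u_1)$ for a clique vertex $u_1$, producing a forbidden $V_{2k+1}$; hence $W=\emptyset$ and $G\cong K_k\vee(\tfrac{m}{k}-\tfrac{k-1}{2})K_1$.

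The main obstacle is step~(i). Unlike Lemma~\ref{l1}, the extremal edge count for connected $P_{2k}$-free graphs is not globally $(k-1)n-\tfrac{1}{2}k(k-1)$: for small $n$, cliques like $K_{2k-1}$ beat $S_{n,k-1}$, so the bound with $S_{n,k-1}$ as the unique extremal graph holds only past a threshold such as $n>(5k-2)/2$, and even then one must use the minimum-degree condition $\delta(H)\geqslant k-1$ coming from $H$ being in the $(k-1)$-core. Establishing this requires a Kopylov/Faudree--Schelp-style stability result tailored to connected $P_{2k}$-free graphs with a minimum-degree assumption. A secondary obstacle lies in step~(ii): the $F_k$ proof's argument used that any extra edge in $G[U\setminus V(H)]$ combines with $C_{2k-1}$ to form a $k$-matching, contradicting $kK_2$-freeness; but $P_{2k}$-freeness is strictly weaker, so one must instead show that such an edge produces a $P_{2k}$ via a connecting path through $H$, or absorb it into a refined $\eta_1$ estimate. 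Once these two lemmas are in hand, the rest of the proof follows the Section~3 template with only minor bookkeeping changes.
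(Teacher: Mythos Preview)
There is no paper proof to compare against: the statement you are attempting is Conjecture~4.1, which the paper explicitly lists as \emph{open} in the concluding remarks (only the cases $V_5$ and $V_7$ are cited as known). So any comparison with ``the paper's own proof'' is vacuous, and your proposal should be read as an attack on an open problem rather than a reconstruction.

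That said, your outline has a concrete gap in step~(v), beyond the two obstacles you already flag. You claim that the alternating $P_{2k-1}$ in $S_{|U|,k-1}$, extended by $w$, yields a $P_{2k}$ in $N_G(u_1)$ for a clique vertex $u_1$. But the alternating $P_{2k-1}$ in $S_{|U|,k-1}$ necessarily uses \emph{all} $k-1$ clique vertices, including $u_1$ itself, so it does not lie inside $N_G(u_1)$. If you exclude $u_1$, only $k-2$ clique vertices remain, and the longest alternating path inside $G[U]\setminus\{u_1\}$ has $2k-3$ vertices; attaching $u^*$ at one end and $w$ at the other (both are in $N_G(u_1)$ and adjacent to every independent vertex, but $u^*\not\sim w$) gives only a $P_{2k-1}$, not a $P_{2k}$. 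Taking $u^*$ as the center instead does not help either, since the longest path in $G[U]\cong S_{|U|,k-1}$ is $P_{2k-1}$. Thus the contradiction you need to rule out $W\neq\emptyset$ does not follow from the configuration you describe, and this is not a mere bookkeeping issue---it is plausibly one of the reasons the conjecture remains open. You would need either a sharper structural conclusion than $G[U]\cong S_{|U|,k-1}$ before reaching step~(v), or an entirely different endgame for eliminating $W$.
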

In \cite{YLP} and \cite{ZW}, they both proved the conjecture holds for $V_5$. In \cite{ZW2}, they confirmed the conjecture holds for $V_7$. Observe that $F_k$ is a subgraph of $V_{2k+1}$. If the conjecture holds, then we also can get the Brualdi–Hoffman–Tur\'{a}n problem for friendship graph.

\end{document}